\newcommand{\bbN}{{\mathbb{N}}}
\newcommand{\bbR}{{\mathbb{R}}}
\newcommand{\bbC}{{\mathbb{C}}}
\newcommand{\no}{\nonumber}
\newcommand{\ti}{\tilde  }
\newcommand{\beq}{\begin{equation}}
\newcommand{\eeq}{\end{equation}}
\newcommand{\ba}{\begin{align}}
\newcommand{\ea}{\end{align}}
\newcommand{\vphi}{\varphi}
\numberwithin{equation}{section}
\newtheorem{theorem}{Theorem}[section]
\newtheorem{lemma}[theorem]{Lemma}
\newtheorem{corollary}[theorem]{Corollary}
\theoremstyle{definition}
\theoremstyle{remark}
\newtheorem*{remark*}{Remark}
\title{Sine kernel asymptotics for a class of singular measures}
\author{Jonathan Breuer\\
\footnotesize Institute of Mathematics,  The Hebrew University of Jerusalem, Givat Ram, \\
\footnotesize 91904 Jerusalem, Israel. \\
\footnotesize Email: jbreuer@math.huji.ac.il}
\date{}
\begin{document}
\maketitle
\begin{abstract}
We construct a family of measures on $\bbR$ that are purely singular with respect to Lebesgue measure, and yet exhibit universal sine-kernel asymptotics in the bulk. The measures are best described via their Jacobi recursion coefficients: these are sparse perturbations of the recursion coefficients corresponding to Chebyshev polynomials of the second kind. We prove convergence of the renormalized Christoffel-Darboux kernel to the sine kernel for any sufficiently sparse decaying perturbation. 
\end{abstract}
\sloppy
\section{Introduction}
Let $d\mu(x)=w(x)dx+d\mu_{\textrm{sing}}(x)$ be a compactly supported measure on $\bbR$, (where $d\mu_{\textrm{sing}}$ is the part of $d\mu$ that is 
singular with respect to Lebesgue measure), and let $\{p_n\}_{n=0}^\infty$ be the sequence of orthogonal polynomials 
associated with $\mu$. Namely,
\beq \no
p_n=\gamma_n x^n+\textrm{lower order}
\eeq
with $\gamma_n>0$ and
\beq \no
\int_\bbR p_n(x)p_m(x) d\mu(x)=\delta_{m,n}.
\eeq
The Christoffel-Darboux (CD) kernel,
\beq \no
K_n(x,y)=\sum_{j=0}^{n-1}p_j(x)p_j(y),
\eeq
is the kernel of the projection onto the subspace of $L^2(d\mu)$ of polynomials of degree less than $n$. 
It arises in various natural contexts and its properties have been the focus of many works (for reviews see \cite{NevFr, Simon-CD}). 
A significant portion of these works study asymptotics of $K_n(x+\frac{a}{n},x+\frac{b}{n})$ as $n \rightarrow \infty$. 
There are two main motivations for studying these asymptotics. First, the CD kernel arises as the correlation kernel for the eigenvalues of the 
unitary ensembles of Hermitian matrices and so, its asymptotics describe the asymptotic distribution for these eigenvalues (see, e.g., \cite{deift}). 
Second, if $y$ is a zero of $p_n$ then $x$ is a zero of $p_n$ iff $K_n(x,y)=0$. 
Thus, the asymptotic properties of $K_n(x+\frac{a}{n},x+\frac{b}{n})$ are connected to the small scale behavior
of the zeros of the $p_n$ around $x$ as $n \rightarrow \infty$ (see, e.g., \cite{Freud, LeLu, Simon-CD}). 

Until recently, except for some classical cases, where the asymptotics of the $p_n$ were well investigated, the general methods for studying 
\mbox{$K_n(x+\frac{a}{n},x+\frac{b}{n})$} required $d\mu_{\textrm{sing}}=0$ and some degree of smoothness from $w(x)$ (see, e.g., \cite{lubinskyRev} and references therein). In these cases,
it was shown that, for $x_0$ in the interior of the support of the measure,
\beq \label{1.1}
\lim_{n \rightarrow \infty}\frac{K_n(x_0+\frac{a}{n},x_0+\frac{b}{n})}{K_n(x_0,x_0)}=\frac{\sin(\pi \rho(x_0) (b-a))}{\pi \rho(x_0) (b-a)}
\eeq 
where $\rho(x)$ is the asymptotic density of the zeros of $p_n$ at $x$. 
The limit in \eqref{1.1} is known as the {\it universality limit in the bulk} since, apart from the normalizing factor of $\rho(x_0)$, the limiting kernel is 
independent of $x_0$ and the particular form of $\mu$.
Two new methods introduced by Lubinsky \cite{lubinsky1, lubinsky2} enable the derivation of such a limit under much weaker requirements from the measure. 
In particular, in \cite{lubinsky1} it was shown that if $\mu$ is a regular measure on $(-2,2)$ which is absolutely continuous on a 
neighborhood of $x_0 \in \textrm{ interior of supp}(\mu)$ and has a continuous and positive Radon-Nikodym derivative at $x_0$ then \eqref{1.1} holds 
uniformly for $a,b$ in compact subsets of the complex plane. It is important to note that continuity of $w$ at $x_0$ can be replaced by a Lebesgue point 
type condition. Moreover, there exist some extensions of this result to more general sets and less restrictive conditions on the derivative of the measure (see \cite{als, findley, Simon-ext, Simon-CD, totik}). However, to the best of our knowledge, all existing methods for obtaining \eqref{1.1} require absolute continuity of the measure.

The purpose of this note is to present a class of purely singular measures for which \eqref{1.1} holds. 
We shall construct these measures through their Jacobi parameters---the parameters entering in the recursion relation of the $p_n$'s:
\beq \label{1.2}
x p_n(x)=a_{n+1}p_{n+1}(x)+b_{n+1}p_n(x)+a_n p_{n-1}(x) \quad n>0, \\
\eeq
\beq \label{1.2.1}
x p_0(x)=a_1 p_1(x)+b_1 p_0(x).
\eeq
It is a classical result that such a relation is satisfied by the set of orthogonal polynomials associated with any compactly supported, infinitely 
supported measure, with $a_n>0$ and $b_n \in \bbR$ both bounded sequences (by `infinitely supported' we mean that the support is not a finite set). 
On the other hand, any Jacobi matrix, 
\beq \label{1.3}
J \left(\{a_n,b_n\}_{n=1}^\infty \right)=\left(
\begin{array}{ccccc}
b_1    & a_1 & 0      & 0      & \dots \\
a_1    & b_2 & a_2    & 0      & \ddots \\
0      & a_2 & b_3    & a_3    & \ddots \\
\vdots & \ddots   & \ddots & \ddots & \ddots \\
\end{array} \right)
\eeq
(with the $a_n$'s positive and bounded and $b_n$'s bounded), can be viewed as a bounded self-adjoint operator in $\ell^2$ with $(1,0,0,0,\ldots)^T$ a cyclic vector. 
Thus, by the spectral theorem, $J$ and $(1,0,0,0,\ldots)^T$ have a spectral measure associated with them. The mappings $J \mapsto \mu$ via the spectral theorem and 
$\mu \mapsto J$ via the orthogonal polynomial recursion relation, for bounded Jacobi matrices and compactly supported, infinitely supported 
probability measures, can be shown to be inverses of each other (see e.g.\ \cite{deift}), and so we obtain a $1-1$ correspondence between these two families of objects. 

Perhaps the simplest case is that of the (rescaled) Chebyshev polynomials of the second kind.
In this case, $d\mu_0(x)=\frac{\sqrt{4-x^2}}{2 \pi} \chi_{[-2,2]}(x)dx$, and 
the orthogonal polynomials (for $x=2\cos (\theta)$) are $U_n(x)=\frac{\sin((n+1)\theta)}{\sin(\theta)}$. 
The asymptotic density of zeros is $\rho_0(x)=\pi^{-1} (\sqrt{4-x^2})^{-1} \chi_{[-2,2]}(x)$ and the corresponding Jacobi matrix is
\beq \label{1.4}
J_0=\left(
\begin{array}{ccccc}
0    & 1 & 0      & 0      & \dots \\
1    & 0 & 1    & 0      & \ddots \\
0      & 1 & 0    & 1    & \ddots \\
\vdots & \ddots   & \ddots & \ddots & \ddots \\
\end{array} \right).
\eeq

We shall obtain our family of measures by adding a decaying sparse perturbation to the Jacobi matrix $J_0$. 
That is, we shall consider the 
Jacobi parameters 
\beq \label{1.10}
a_n \equiv 1 \qquad b_n= \left \{ \begin{array}{ccc} v_j  &n=N_j \\
0  &\textrm{otherwise} \end{array} \right.
\eeq
where $\{N_j \}_{j=1}^\infty$ is an increasing sequence of natural numbers satisfying 
\beq \label{cond1}
\frac{N_{j+1}}{N_j} \rightarrow \infty
\eeq 
and 
\beq \label{cond2}
v_j \rightarrow 0.
\eeq 
Such matrices are known as sparse Jacobi matrices and have 
served as the first explicit examples of discrete Schr\"odinger operators with 
singular continuous spectral measures. The review \cite{last-review} contains a survey of some of the extensive research carried out in this context since Pearson's paper \cite{pearson}, where Schr\"odinger operators  with sparse decaying potentials were introduced. Here we shall rely on Theorem 1.7 in \cite{kls}, which says that when \eqref{cond1} and \eqref{cond2} hold then the spectral measure
is absolutely continuous on $(-2,2)$ if $\sum_{j=1}^\infty v_j^2 < \infty$, and purely singular continuous there if $\sum_{j=1}^\infty v_j^2 = \infty$. 

We shall prove the following
\begin{theorem} \label{theorem}
Let $\{v_j\}_{j=1}^\infty$ be a sequence of real numbers such that $v_j \rightarrow 0$ as $j \rightarrow \infty$. 
If the sequence $\{N_j\}_{j=1}^\infty$ is sufficiently sparse $($see below$)$ and $\mu$ is the measure corresponding to the 
Jacobi parameters given by \eqref{1.10}, then for every $x \in (-2,2)$ and any $a, b \in \bbR$
\beq \label{1.12}
\lim_{n \rightarrow \infty} \frac{ K_n(x+\frac{a}{n},x+\frac{b}{n})}{K_n (x,x)}=
\frac{\sin \left((\sqrt{4-x^2})^{-1} (b-a) \right)}
{(\sqrt{4-x^2})^{-1} (b-a)},
\eeq
where $K_n(x,y)$ is the corresponding CD kernel.
\end{theorem}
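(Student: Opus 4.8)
The plan is to exploit the fact that the perturbation is supported on the sparse set $\{N_j\}$, so that between consecutive sites the recursion \eqref{1.2} is \emph{exactly} the free (Chebyshev) one. For $x = 2\cos\theta$ with $\theta\in(0,\pi)$, the free recursion $p_{n+1} = x p_n - p_{n-1}$ has the two-dimensional solution space spanned by $\sin((n+1)\theta)$ and $\cos((n+1)\theta)$; hence on each free block $N_k \le n < N_{k+1}$ there are an amplitude $R_k(x) > 0$ and a phase $\eta_k(x)$ with the \emph{exact} representation
\[
p_n(x) = \frac{R_k(x)}{\sin\theta}\,\sin\bigl((n+1)\theta + \eta_k(x)\bigr),\qquad N_k \le n < N_{k+1},
\]
normalized by $R_0 \equiv 1$, $\eta_0\equiv 0$ so that $p_n = U_n$ before the first bump. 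The passage from block $k-1$ to block $k$ is governed by a single transfer matrix carrying the bump $v_k$ in its $(1,1)$ entry; in EFGP/Pr\"ufer variables this produces jumps $\eta_k = \eta_{k-1} + O(v_k)$ and $\log R_k = \log R_{k-1} + O(v_k)$, the leading terms depending on the total Pr\"ufer angle $\Theta_k \approx N_k\theta + \eta_{k-1}$ at the site. This is the machinery underlying the analysis in \cite{kls}.

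First I would insert this representation into the Christoffel--Darboux formula $K_n(x,y) = \bigl(p_n(x)p_{n-1}(y) - p_{n-1}(x)p_n(y)\bigr)/(x-y)$, valid since $a_n\equiv 1$. Writing $x + a/n = 2\cos\theta_a$ one has $\theta_a = \theta - \tfrac{a}{2n\sin\theta} + O(n^{-2})$, so the free phase shifts by $(n+1)\theta_a = (n+1)\theta - ca + O(n^{-1})$ with $c := (2\sin\theta)^{-1} = (\sqrt{4-x^2})^{-1}$. Letting $N_J$ be the largest site $\le n$ and abbreviating $\phi = (n+1)\theta + \eta_J(x)$, a product-to-sum computation gives, for the numerator,
\[
p_n\!\Bigl(x+\tfrac{a}{n}\Bigr)p_{n-1}\!\Bigl(x+\tfrac{b}{n}\Bigr) - p_{n-1}\!\Bigl(x+\tfrac{a}{n}\Bigr)p_n\!\Bigl(x+\tfrac{b}{n}\Bigr) \approx \frac{R_J(x)^2}{\sin\theta}\,\sin\bigl(c(a-b)\bigr),
\]
the oscillating $2\phi$ terms cancelling exactly. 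Dividing by $(a-b)/n$ yields $K_n(x+\tfrac an, x+\tfrac bn) \approx \tfrac{n R_J^2}{(a-b)\sin\theta}\sin(c(a-b))$; the same scheme with $a=b\to0$, or a direct evaluation of $\sum_{j<n}p_j(x)^2$ via equidistribution of $\{(j+1)\theta\}$ together with the dominant-balance fact that the blocks of amplitude $\approx R_J$ control the sum, gives $K_n(x,x) \approx \tfrac{n R_J^2}{2\sin^2\theta}$. Taking the quotient, the factors $n$ and $R_J^2$ cancel, and since the sine cardinal is even in $a-b$, the limit \eqref{1.12} follows.

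The crux, and the only place where \emph{sufficient} sparseness (the condition announced ``below'') is really needed, is the claim that amplitude and phase are essentially frozen on the scale $1/n$, i.e.\ $R_J(x+\tfrac an)/R_J(x)\to1$ and $\eta_J(x+\tfrac an)-\eta_J(x)\to0$, uniformly for $a$ in compacts. Differentiating the representation in $x$ brings down the free phase $\Theta_k \approx N_k\theta$, whose $x$-derivative is of order $N_k$; the Pr\"ufer recursions then give $|\eta_J'(x)| + |R_J'(x)/R_J(x)| \lesssim C_J\sum_{k\le J} v_k N_k$, where $C_J = \prod_{l\le J}(1 + O(v_l))$. Since $n\ge N_J$, freezing on scale $1/n$ amounts to
\[
\frac{C_J}{N_J}\sum_{k\le J} v_k N_k \longrightarrow 0 ,
\]
and because the left-hand side is determined by $v_1,\dots,v_J$ and $N_1,\dots,N_{J-1}$ alone, one can force it to hold by choosing each $N_J$ large enough relative to the earlier data---this is the quantitative meaning of ``sufficiently sparse,'' and it is compatible with \eqref{cond1}. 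I expect this derivative estimate, together with its uniformity, the equidistribution error in the diagonal sum, and the harmless $O(v_J)$ block-boundary mismatches when $n$ is adjacent to a site, to be the main technical burden; the trigonometric identities and leading-order asymptotics are routine once freezing is established, and the argument then covers both the absolutely continuous regime $\sum v_j^2<\infty$ and the purely singular regime $\sum v_j^2 = \infty$ without change.
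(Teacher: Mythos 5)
Your overall architecture --- exact Chebyshev evolution between bumps, Pr\"ufer amplitude/phase $(R_k,\eta_k)$ (equivalent to the paper's variation-of-parameters coefficients $A_n(x)$), the Christoffel--Darboux formula, and the reduction of everything to ``freezing'' of $R_J$ and $\eta_J$ on scale $1/n$ --- is the right one and matches the paper in spirit. The gap is in how you establish freezing. Your derivative bound is $|\eta_J'|+|(\log R_J)'|\lesssim C_J\sum_{k\le J}v_kN_k$ with $C_J=\prod_{l\le J}(1+O(v_l))$, and you then require $\frac{C_J}{N_J}\sum_{k\le J}v_kN_k\to0$. Two problems. First, this quantity is \emph{not} determined by $v_1,\dots,v_J$ and $N_1,\dots,N_{J-1}$ alone: the $k=J$ term contributes $C_Jv_J$ after division by $N_J$, and no choice of $N_J$ affects it. Second, $C_Jv_J\to0$ genuinely fails in the regime the theorem is about: whenever $\sum_l|v_l|=\infty$ (which holds in every purely singular case $\sum_l v_l^2=\infty$, and in many a.c.\ cases as well), $C_J$ can grow like $e^{c\sum_{l\le J}|v_l|}$, vastly faster than $v_J$ decays (take $v_l=1/\log(l+2)$). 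So the freezing condition you propose cannot be arranged by sparseness, and your closing remark that the argument covers the singular regime ``without change'' is exactly where it breaks.

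The paper avoids any cumulative derivative estimate by an inductive comparison with the finite-rank truncations $\mu^{(\ell)}$ (bumps at $N_1,\dots,N_\ell$ only). For fixed $\ell$ the coefficient vector $A^{(\ell)}(x)$ is a fixed polynomial, so its freezing on scale $1/n$ is automatic as $n\to\infty$, with no rate needed; one simply chooses $N_{\ell+1}$ \emph{after} the renormalized kernel of $\mu^{(\ell)}$ is within $1/\ell$ of the sine kernel and after the ratio $\|A^{(\ell)}(x+\frac{a}{n})\|^2/\|A^{(\ell)}(x)\|^2$ has stabilized to within a factor $2$ (see \eqref{2.16.1}--\eqref{2.16.2}). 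Then for $N_{\ell+1}\le n<N_{\ell+2}$ the full kernel equals $K_n^{(\ell+1)}$, and the explicit one-bump formula \eqref{2.19} shows that the renormalized kernels at levels $\ell$ and $\ell+1$ differ by $O(|v_{\ell+1}|M_{I_{m_{\ell+1}}}^4)\to0$; the error is a single-step quantity and no product over all previous levels ever appears. If you want to keep the Pr\"ufer formulation you must restructure it the same way: prove freezing for the truncated amplitude and phase by waiting, rather than by a global bound on $\eta_J'$ in terms of all the $v_kN_k$. (Your equidistribution evaluation of $K_n(x,x)$ is a workable alternative to the paper's Cauchy-integral continuation to $a=b$, but it too needs the last one or two blocks to dominate the diagonal sum, which is again a sparseness-plus-waiting statement, not a consequence of \eqref{cond1} alone.)
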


\begin{remark*}
By `$\{N_j\}_{j=1}^\infty$ is sufficiently sparse' we mean that $N_{k+1}$ has to be chosen sufficiently large as a function of $\{N_1,N_2,\ldots, N_k \}$. In other words, for any $k \geq 1$ there exists a function $\widetilde{N}_k(N_1, N_2, \ldots, N_k)$ such that $N_{k+1} \geq \widetilde{N}_k(N_1, N_2, \ldots N_k)$. The sequence of functions $\widetilde{N}_k$ depends on $\{v_j\}_{j=1}^\infty$.
\end{remark*}

\begin{corollary} \label{corollary1}
There exist purely singular measures such that \eqref{1.12} holds for every $x \in (-2,2)$.
\end{corollary}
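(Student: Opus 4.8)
The plan is to derive the corollary directly from Theorem~\ref{theorem} together with the spectral dichotomy of \cite{kls}, so the whole task reduces to exhibiting a single admissible perturbation sequence for which the resulting measure is purely singular. The key observation is that Theorem~\ref{theorem} holds for \emph{any} sequence with $v_j \to 0$, provided $\{N_j\}$ is taken sufficiently sparse; the universality limit \eqref{1.12} places no lower restriction on the size of the $v_j$ and, in particular, says nothing about the spectral type. This decouples the two demands: we are free to tune $\{v_j\}$ so as to force singularity while retaining \eqref{1.12}, because the latter is guaranteed as soon as \eqref{cond2} holds and $\{N_j\}$ is chosen appropriately.

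Concretely, I would fix a sequence with $v_j \to 0$ but $\sum_{j=1}^\infty v_j^2 = \infty$, for instance $v_j = j^{-1/2}$, so that \eqref{cond2} is satisfied. I would then choose $\{N_j\}_{j=1}^\infty$ increasing fast enough that two conditions hold simultaneously: first, the quantitative sparseness hypothesis of Theorem~\ref{theorem} (i.e. $N_{k+1} \geq \widetilde N_k(N_1,\dots,N_k)$ for the functions associated to this $\{v_j\}$), and second, $N_{j+1}/N_j \to \infty$, which is \eqref{cond1}. These are both lower-growth requirements on $N_{k+1}$, so they are compatible: one simply takes $N_{k+1}$ to exceed the larger of the two thresholds at every stage. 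Letting $\mu$ be the measure attached by \eqref{1.10} to this data, Theorem~\ref{theorem} yields \eqref{1.12} for every $x \in (-2,2)$ and all $a,b \in \bbR$, and since $\sum_j v_j^2 = \infty$, Theorem~1.7 of \cite{kls} shows that $\mu$ is purely singular continuous on $(-2,2)$.

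It remains to upgrade ``purely singular continuous on $(-2,2)$'' to ``purely singular as a measure on $\bbR$.'' Here I would note that since $a_n \equiv 1$ and $b_n \to 0$, the sparse perturbation is a compact perturbation of $J_0$, so the essential spectrum is exactly $[-2,2]$; any absolutely continuous part of $\mu$ must therefore be supported in $[-2,2]$. The KLS result removes the a.c. part on the open interval $(-2,2)$, the two endpoints $\{\pm 2\}$ are Lebesgue-null, and outside $[-2,2]$ the spectrum is at most a discrete set of eigenvalues (pure point, hence singular). Combining these, $\mu$ has no absolutely continuous component, i.e. $\mu$ is purely singular with respect to Lebesgue measure, and it satisfies \eqref{1.12} throughout $(-2,2)$, which is the assertion of the corollary.

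The only genuine point requiring care in this argument is the simultaneous satisfaction of the Theorem's sparseness requirement and condition \eqref{cond1} demanded by \cite{kls}, together with the verification that no a.c.\ part can hide at the edges or outside the band; both are routine once the structure above is in place. The substantive analytic difficulty---establishing \eqref{1.12} for these singular measures via the block asymptotics of the orthonormal polynomials---resides entirely in Theorem~\ref{theorem}, which we are permitted to assume, so the corollary itself presents no serious obstacle beyond choosing $\{v_j\}$ with divergent square-sum and checking these compatibilities.
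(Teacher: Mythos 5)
Your proposal is correct and follows essentially the same route as the paper: choose $\{v_j\}$ with $v_j\to 0$ but $\sum_j v_j^2=\infty$, take $\{N_j\}$ sparse enough to satisfy both the hypothesis of Theorem \ref{theorem} and \eqref{cond1}, and invoke Theorem 1.7 of \cite{kls} for pure singularity together with Theorem \ref{theorem} for \eqref{1.12}. The paper's own proof is just this observation stated in two sentences; your additional checks (compatibility of the two lower-growth requirements on $N_{k+1}$, and the absence of an a.c.\ component at the band edges or outside $[-2,2]$) are correct routine details that the paper leaves implicit.
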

\begin{proof}
As remarked above, Theorem 1.7 in \cite{kls} says that if $\sum_{j=1}^\infty v_j^2 =\infty$ and $\frac{N_{j+1}}{N_j}\rightarrow \infty$ then the measure is purely singular. Thus, by picking such sequences that satisfy the hypothesis of Theorem \ref{theorem}, we get a purely singular measure satisfying \eqref{1.12}.
\end{proof}

The idea of the proof of Theorem \ref{theorem} is quite simple. For any finite rank perturbation of $J_0$, universality holds. Thus, having chosen 
$\{N_j\}_{j=1}^K$, one may place $N_{K+1}$ only after the renormalized CD kernel is very close to its sine kernel limit. The heart of the proof lies in showing that, since $v_{K+1}$ is small, the perturbation at $N_{K+1}$ is weak and does not produce a substantial change in the renormalized CD kernel. 

After obtaining some preliminary results in Section 2, we prove Theorem \ref{theorem} in Section 3.

\textbf{Acknowledgments} We thank Yoram Last and Barry Simon for useful discussions. This research was supported by The Israel Science Foundation (Grant no.\ 1105/10).


\section{Preliminaries}

As our analysis is a perturbative analysis, we begin by sketching a proof of universality for the unperturbed model, namely, the second kind Chebyshev polynomials which correspond to the matrix $J_0$. 
A useful device, which will also play a central role in this paper, is the Christoffel-Darboux formula:
\beq \label{1.5}
K_n(x,y)=a_n \frac{p_n(x)p_{n-1}(y)-p_n(y)p_{n-1}(x)}{x-y},
\eeq 
\beq \label{1.6}
K_n(x,x)=a_n \left(p_n'(x)p_{n-1}(x)-p_n(x)p_{n-1}'(x)\right).
\eeq
Using this formula, one can show directly that, for $d\mu(x)=\frac{\sqrt{4-x^2}}{2 \pi}\chi_{[-2,2]}(x)dx$, $x \in (-2,2)$, and $a,b \in \bbC$, with $a \neq b$
\beq \label{1.7}
\begin{split}
& \lim_{n \rightarrow \infty} \frac{K_n(x+\frac{a}{n},x+\frac{b}{n})}{n}= \\
& \lim_{n \rightarrow \infty} \frac{U_n(x+\frac{a}{n})U_{n-1}(x+\frac{b}{n})-
U_n(x+\frac{b}{n})U_{n-1}(x+\frac{a}{n})}{b-a} =\\
& \frac{2\sin \left((\sqrt{4-x^2})^{-1} (b-a) \right)}{\sqrt{4-x^2} (b-a)}
\end{split}
\eeq
and the convergence is uniform in $|a|,|b|<C$ and $|b-a|> \delta$ for every $\delta, C >0$. In fact, it is not hard to see directly that the restriction $|a-b|>\delta$ is unnecessary, but we want to use an argument which will play an important role in what follows.
Note that for fixed $a \in \bbC$, $K_n(x+\frac{a}{n},x+\frac{b}{n})$ is analytic as a function of $b$. The limit function in \eqref{1.7} is analytic as well. By the uniform convergence in each annulus around $a$, it follows from Cauchy's integral formula that convergence holds also for $b =a$ and in fact is uniform in $|a|,|b|<C$ (where we interpret $\frac{\sin(0)}{0}=1$).

By considering the limit for $a=b=0$, this immediately implies 
\beq \label{1.9}
\begin{split}
\lim_{n \rightarrow \infty} \frac{K_n(x+\frac{a}{n},x+\frac{b}{n})}{K_n(x,x)}&=\lim_{n \rightarrow \infty}  \frac{K_n(x+\frac{a}{n},x+\frac{b}{n})}{n} \lim_{n \rightarrow \infty} \frac{n}{K_n(x,x)} \\ &=
\frac{\sin \left((\sqrt{4-x^2})^{-1} (b-a) \right)}
{(\sqrt{4-x^2})^{-1} (b-a)}
\end{split}
\eeq
which is precisely \eqref{1.1} (recall $\rho_0(x)=\pi^{-1} (\sqrt{4-x^2})^{-1} \chi_{[-2,2]}(x)$).

Now fix sequences $\{v_j\}_{j=1}^\infty$ and $\{N_j\}_{j=1}^\infty$ with $N_{j+1}/N_j \rightarrow \infty$, and let $\mu$ be the spectral measure corresponding to the Jacobi parameters given by \eqref{1.10}.
Let $\{p_n(x) \}_{n=0}^\infty$ be the orthogonal polynomials associated with $\mu$. 

We use variation of parameters. We consider \eqref{1.10} as a perturbation on $J_0$. Fix $x \in (-2,2)$ and let $\psi^1_n(x)$ and $\psi^2_n(x)$ be the two solutions of the difference equation
$x \psi_n(x)=\psi_{n+1}(x)+ \psi_{n-1}(x)$ for $n \geq 0$, satisfying the boundary conditions 
\beq \label{2.2}
\psi^1_0(x)=1,\ \psi^1_{-1}(x)=0 \quad \psi^2_0(x)=0,\ \psi^2_{-1}(x)=1. 
\eeq
Explicitly, it is easy to see that if $x=2 \cos (\theta)$ for $\theta \in (0,\pi)$, then 
\beq \label{2.3} 
\psi^1_{n}(x)=\frac{\sin(n+1)\theta}{\sin(\theta)}
\eeq 
(the second kind Chebyshev polynomials) and 
\beq \label{2.4}
\psi^2_n(x)=\frac{-\sin(n \theta)}{\sin(\theta)}.
\eeq

Now, define $A_n(x) \in \bbC^2$ ($n \geq 1$) by
\beq \label{2.5}
\begin{split}
p_n(x)&=A_{n,1}(x)\psi^1_n(x)+A_{n,2}(x) \psi^2_n(x) \\
p_{n-1}(x)&=A_{n,1}(x)\psi^1_{n-1}(x)+A_{n,2}(x) \psi^2_{n-1}(x),
\end{split}
\eeq
or, in matrix form,
\beq \label{2.6}
\left(\begin{array}{c} p_n(x) \\ p_{n-1}(x) \end{array} \right)=\left( \begin{array}{cc} \psi^1_n(x)  &  \psi^2_n(x) \\ 
\psi^1_{n-1}(x) & \psi^2_{n-1}(x) \end{array} \right) \left(\begin{array}{c} A_{n,1}(x) \\ A_{n,2}(x) \end{array} \right).
\eeq
We denote 
\beq \label{2.7}
T_n(x)=\left( \begin{array}{cc} \psi^1_n(x)  &  \psi^2_n(x) \\ 
\psi^1_{n-1}(x) & \psi^2_{n-1}(x) \end{array} \right)
=\left( \begin{array}{cc} x  &  -1 \\ 
1 & 0 \end{array} \right)^{n}
\eeq
and note that $\det T_n(x)=1$. Moreover, for any closed interval $I \subseteq (-2,2)$ there exists $M_I>0$ such that for any $n$ and any $x \in I$, 
\beq \label{2.8}
\parallel T_n(x) \parallel \leq M_I.
\eeq
In fact, it will be crucial later on, to be able to extend this bound slightly to the complex plane:
\begin{lemma} \label{bound-extension}
Let $I \subset (-2,2)$ be a closed interval. There exists $M_I>0$ such that for any $x \in I$, $t \in \bbR$ with $|t| \leq 1$,
\beq \label{2.8.1}
\left \| T_n \left(x+\frac{it}{n}\right) \right \| \leq M_I.
\eeq
\end{lemma}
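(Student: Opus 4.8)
The plan is to use the explicit formulas \eqref{2.3}--\eqref{2.4} for the entries of $T_n$ and to continue them analytically to $z = x + it/n$. Since $\psi^1_n(x) = U_n(x/2)$ and $\psi^2_n(x) = -U_{n-1}(x/2)$ are polynomials in $x$ (here $U_m$ denotes the Chebyshev polynomial of the second kind), the matrix $T_n(z)$ in \eqref{2.7} is well defined for every $z \in \bbC$, and it suffices to bound each of its four entries at $z = x + it/n$ by a constant depending only on $I$.

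First I would parametrize. For $z$ in a complex neighborhood of $I$, write $z = 2\cos\theta'$ with $\theta' = \theta'(z)$ chosen to depend continuously on $z$ and to equal $\theta$ (where $x = 2\cos\theta$, $\theta \in (0,\pi)$) when $z = x$ is real. Because $I \subset (-2,2)$ is compact, the corresponding $\theta$ range over a compact subinterval $[\theta_-,\theta_+] \subset (0,\pi)$, so $\sin\theta \geq c_I > 0$ there. With this parametrization the analytically continued entries of $T_n$ are $\sin((n+1)\theta')/\sin\theta'$ and $-\sin(n\theta')/\sin\theta'$, exactly as in \eqref{2.3}--\eqref{2.4} but with the complex argument $\theta'$.

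The key quantitative step is to show that $|\Ima\,\theta'(x + it/n)| \leq C_I/n$ uniformly for $x \in I$, $|t| \leq 1$ and all $n$. This follows from the relation $\cos\theta' = \cos\theta + it/(2n)$: solving for $\theta'$ near $\theta$ and using that $\sin\theta$ is bounded below on $I$ gives $\theta' - \theta = -it/(2n\sin\theta) + O(n^{-2})$, so the imaginary part of $\theta'$ is of order $1/n$ with a constant controlled by $c_I^{-1}$. Consequently $(n+1)|\Ima\,\theta'|$ is bounded by a constant $D_I$ independent of $n$. Using the elementary bound $|\sin(\alpha + i\beta)| \leq \cosh\beta$ for real $\alpha,\beta$, I get $|\sin((n+1)\theta')| \leq \cosh((n+1)\Ima\,\theta') \leq \cosh D_I$ and likewise $|\sin(n\theta')| \leq \cosh D_I$. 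Since $\theta'$ stays within $O(1/n)$ of $[\theta_-,\theta_+]$, the denominator satisfies $|\sin\theta'| \geq c_I/2$ for $n$ large. Combining these bounds on numerators and denominators bounds each entry of $T_n(x + it/n)$, hence $\|T_n(x + it/n)\| \leq M_I$, after enlarging $M_I$ to absorb the finitely many small $n$ (which are controlled by continuity on a compact set).

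I expect the main obstacle to be making the control of $\theta'(z)$ genuinely uniform: one must verify, via the implicit function theorem or a direct inversion of $z \mapsto 2\cos\theta'$, that a single branch of $\theta'$ can be chosen on a fixed complex neighborhood of $I$ and that the $O(1/n)$ estimate on $\Ima\,\theta'$ together with the lower bound on $|\sin\theta'|$ hold with constants depending only on $I$, not on $x$, $t$, or $n$. Once this uniformity is secured, the cancellation that keeps the powers of the transfer matrix bounded on the real axis---the eigenvalues $e^{\pm i\theta}$ lying on the unit circle---survives the $it/n$ perturbation precisely because the eigenvalues move off the unit circle by only $O(1/n)$, so their $n$th powers grow by at most a bounded factor.
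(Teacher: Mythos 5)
Your proposal is correct and follows essentially the same route as the paper: write $x+it/n=2\cos(\theta_0+\delta_n)$, show $\delta_n=O(1/n)$ uniformly on $I$ via the lower bound on $\sin\theta_0$, and conclude that $n\delta_n$ stays bounded so the sine quotients defining the entries of $T_n$ remain uniformly bounded. The only difference is cosmetic: you invoke $|\sin(\alpha+i\beta)|\leq\cosh\beta$ directly, while the paper expands $\sin(n\theta_0+n\delta_n)$ by the addition formula and bounds each factor, and you are somewhat more explicit about the branch choice and the finitely many small $n$.
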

\begin{proof}
We shall show that we can uniformly bound $\left | \psi_n^{1,2} \left (x+\frac{it}{n} \right) \right|$.
Fix $x, t$ and let $2\cos (\theta_0) =x$ and $2 \cos (\theta_0+\delta_n)=x+\frac{it}{n}$. By expanding to a Taylor series
\beq \no
\frac{it}{n}=2 \cos(\theta_0+\delta_n)-2 \cos(\theta_0)=2\delta_n \sin(\theta_0)+o(\delta_n)
\eeq
as $\delta_n \rightarrow 0$ (and so as $n \rightarrow \infty$),
we see that $\delta_n=O(\frac{1}{n})$ and the implicit constant depends on $\left |\sin (\theta_0) \right|^{-1}$ which is uniformly bounded on $I$. Write
\beq \no
\frac{\sin(n (\theta_0+\delta_n))}{\sin(\theta_0+\delta_n)}=\frac{\sin(n\theta_0)\cos(n \delta_n)+\cos(n\theta_0) \sin(n\delta_n)}{\sin(\theta_0+\delta_n)}.
\eeq
The denominator above is bounded from below on $I$, $\sin(n \theta_0)$ and $\cos(n \theta_0)$ are both uniformly bounded on $I$ and $n \delta_n$ is uniformly bounded on $I$ as well, by the discussion above. Therefore, $\psi_n^{1,2}\left(x+\frac{it}{n}\right)$ are uniformly bounded on $I$ and we are done.
\end{proof}
We assume, without loss of generality, that $M_I \geq 1$ for all I. For concreteness, we let $I_j=[-2+\frac{1}{j},2-\frac{1}{j}]$ for $j \geq 1$ and $M_j \equiv M_{I_j}$.

It is well known (and follows from \eqref{2.5}/\eqref{2.6}) that $A_n(x)$ satisfies the recurrence relation:
\beq \label{2.9}
A_{n+1}(x)= A_{n}(x)+\Phi_n(x) A_{n}(x)
\eeq
where 
\beq \label{2.10}
\Phi_n(x)=-b_{n+1}\left( \begin{array}{cc} \psi^1_n(x) \psi^2_n(x)  &  \left( \psi^2_n(x) \right)^2 \\ 
-\left( \psi^1_{n}(x) \right)^2 & - \psi^1_n(x)\psi^2_n(x) \end{array} \right).
\eeq
By noting $\left( I+\Phi_n(x) \right)^{-1}=I-\Phi_n(x)$ we get also that 
\beq \label{2.10.1}
A_n(x)= A_{n+1}(x)-\Phi_n(x) A_{n+1}(x).
\eeq
By extending the definition of $M_I$, we assume also that $\left \| \Phi_n(x) \right \| \leq |b_{n+1}| M_I^2$ for any $n \in \bbN$, $x \in I$. 

Thus, we immediately see that along stretches where $b_{n+1}=0$, $A_n(x)$ is constant in $n$. 
Moreover, in this case, $A_{n}(x+\frac{a}{n})-A_{n}(x)$ is small for large $n$, so we can approximate $p_n(x+\frac{a}{n})$ by  
\mbox{$A_{n,1}(x) \psi^1_n(x+\frac{a}{n})+A_{n,2}(x) \psi^2_n(x+\frac{a}{n})$}. 

For constant $A$ we have

\begin{lemma}\label{lemma1}
For any $A=\left(\begin{array}{c} A_1 \\ A_2 \end{array} \right) \in \bbC^2$, let 
\beq \label{2.11.1}
\vphi^A_n(x)=A_1\psi^1_n(x)+A_2 \psi^2_n(x)
\eeq
and let 
\beq \no
K^A_n(x,y)=\sum_{j=0}^{n-1} \vphi^A_j(x) \vphi^A_j(y).
\eeq

Then, for any $x \in (-2,2)$ and any $a,b \in \bbC$,
\beq \label{2.12.0}
\lim_{n \rightarrow \infty}\frac{1}{\left (A_1^2+A_2^2-A_1A_2 x \right) } \frac{ K^A_n(x+\frac{a}{n},x+\frac{b}{n})}{n}= 
\frac{2\sin \left((\sqrt{4-x^2})^{-1} (b-a) \right)}{\sqrt{4-x^2} (b-a)}.
\eeq
Moreover, for any $C>0$ and any closed interval $I \subseteq (-2,2)$, the convergence is uniform in $1/C < \parallel A \parallel <C$, $|a|, |b|<C$ and $x \in I$.
\end{lemma}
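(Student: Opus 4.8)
The plan is to mimic the proof of \eqref{1.7} sketched above, replacing the second-kind Chebyshev polynomials by the general free solution $\vphi^A_n$. The first step is a Christoffel--Darboux-type identity for $K^A_n$ read off directly from the free recursion. Since $\vphi^A_n(\xi)$ and $\vphi^A_n(\eta)$ both solve $z\psi_n=\psi_{n+1}+\psi_{n-1}$ (at $z=\xi$ and $z=\eta$), multiplying one recursion by the other solution, subtracting, and summing telescopes: setting $W_m=\vphi^A_{m+1}(\xi)\vphi^A_m(\eta)-\vphi^A_m(\xi)\vphi^A_{m+1}(\eta)$ one gets $(\xi-\eta)K^A_n(\xi,\eta)=W_{n-1}-W_{-1}$. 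The boundary term vanishes: by \eqref{2.2} one has $\vphi^A_0=A_1$ and $\vphi^A_{-1}=A_2$ at every spectral parameter, so $W_{-1}=A_1A_2-A_2A_1=0$. Taking $\xi=x+\f{a}{n}$, $\eta=x+\f{b}{n}$ gives the exact identity
\beq \no
\f{K^A_n(x+\f{a}{n},x+\f{b}{n})}{n}=\f{W_{n-1}}{a-b},
\eeq
reducing everything to the asymptotics of the single Wronskian $W_{n-1}$.

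The second step is an amplitude--phase rewriting of $\vphi^A_n$. Writing $x=2\cos\theta$ and using \eqref{2.3}--\eqref{2.4}, a direct expansion gives $\vphi^A_n(x)=\f{R\sin(n\theta+\phi)}{\sin\theta}$, where $R\cos\phi=A_1\cos\theta-A_2$ and $R\sin\phi=A_1\sin\theta$, so that
\beq \no
R^2=A_1^2+A_2^2-A_1A_2x
\eeq
is exactly the normalizing factor in \eqref{2.12.0}. Substituting into $W_{n-1}$ and applying the product-to-sum identity, the resulting bracket splits into a ``difference-frequency'' piece, whose argument involves $n(\theta_\xi-\theta_\eta)$, and a ``sum-frequency'' piece, whose argument involves $n(\theta_\xi+\theta_\eta)$.

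The third step is the limit. Exactly as in the proof of Lemma \ref{bound-extension}, solving $x+\f{a}{n}=2\cos\theta_\xi$ shows $\theta_\xi-\theta=-\f{a}{2n\sin\theta}+O(n^{-2})$, uniformly for $x\in I$ and $|a|<C$, so $n(\theta_\xi-\theta_\eta)\to s:=\f{b-a}{2\sin\theta}$ while $\theta_\xi,\theta_\eta\to\theta$ and $R_\xi,\phi_\xi,R_\eta,\phi_\eta\to R,\phi$. The difference-frequency cosines then converge to $\cos(s+\theta)-\cos(s-\theta)=-2\sin s\,\sin\theta$. The sum-frequency cosines do not converge individually---their argument is $\approx 2n\theta$---but they enter only as a difference of two such cosines whose arguments differ by $\theta_\xi-\theta_\eta=O(n^{-1})$; writing $\cos u-\cos v=-2\sin\f{u+v}{2}\sin\f{u-v}{2}$ with $\f{u-v}{2}=O(n^{-1})$, this difference tends to $0$. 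Collecting terms yields $W_{n-1}\to-R^2\sin s/\sin\theta$, and dividing by $a-b$ and by $R^2$, then using $\sin\theta=\tfrac12\sqrt{4-x^2}$, reproduces precisely the right-hand side of \eqref{2.12.0} for $a\neq b$.

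For the case $a=b$ and for uniformity over the full parameter range I would argue as for \eqref{1.7}: for fixed $a$, the quantity $K^A_n(x+\f{a}{n},x+\f{b}{n})$ is a polynomial in $b$, hence entire, as is the sine limit, and having established uniform convergence on annuli $\delta<|b-a|<C$ around $a$, Cauchy's integral formula upgrades this to uniform convergence on the enclosed disk, covering $b=a$ with the convention $\sin(0)/0=1$. The main obstacle---and the only place real care is needed---is the bookkeeping of uniformity: one must check that the $O(n^{-1})$ estimate on $\theta_\xi-\theta$ and the boundedness of $\sin(n\theta_\xi+\phi_\xi)$ (which for complex $a$ requires $n\,\Ima\,\theta_\xi=O(1)$, exactly as in Lemma \ref{bound-extension}) hold uniformly in $x\in I$, $|a|,|b|<C$ and $\|A\|<C$. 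The division by $R^2=A_1^2+A_2^2-A_1A_2x$ is harmless because this factor stays bounded away from $0$ on the relevant set; for instance for real $A$ with $\|A\|>1/C$ and $x\in I$ one has $R^2\geq\tfrac{\veps_0}{2}\|A\|^2$, where $2-\veps_0=\max_{x\in I}|x|$.
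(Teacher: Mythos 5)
Your proposal is correct and follows essentially the same route as the paper: the paper dismisses the $a\neq b$ case as ``a simple calculation, using \eqref{2.3} and \eqref{2.4}'' and then invokes the same analyticity/Cauchy-integral upgrade to $a=b$ and the same lower bound on $A_1^2+A_2^2-A_1A_2x$ that you use; your Wronskian telescoping and amplitude--phase computation simply supply the details of that calculation. (Like the paper, your lower bound on $R^2$ is only justified for real $A$, which suffices for how the lemma is used.)
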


\begin{proof}
It is a simple calculation, using \eqref{2.3} and \eqref{2.4}, to see that for $a \neq b$
\beq \no
\lim_{n \rightarrow \infty} \frac{ K^A_n(x+\frac{a}{n},x+\frac{b}{n})}{n}= 
\frac{2\left (A_1^2+A_2^2-A_1A_2x \right)\sin \left((\sqrt{4-x^2})^{-1} (b-a) \right)}{\sqrt{4-x^2} (b-a)}
\eeq
and that the convergence is uniform in $\parallel A \parallel <C$, $x \in I$, and $|a|,|b|<C$ with $|a-b|>\delta$ for any $\delta>0$. The same analyticity argument as the one given after \eqref{1.7} shows that convergence holds also for $a=b$ and is uniform in $\parallel A \parallel <C$, $x \in I$, and $|a|,|b|<C$. 

Since, for $x \in I$, $|A_1 A_2 x| < d (A_1^2+A_2^2)$ for some $d<1$, we get that after dividing by $\left (A_1^2+A_2^2-A_1A_2x \right)$ the convergence is still uniform for \mbox{$1/C < \parallel A \parallel$}.
\end{proof}

Now, let $\mu^{(\ell)}$ be the measure associated with the Jacobi coefficients 
\beq \label{2.13}
a_n \equiv 1 \quad b_n=\left \{ \begin{array}{cc} v_j & n= N_j, \quad j \leq \ell \\
0 & \textrm{otherwise.} \end{array} \right.
\eeq
Let $K^{(\ell)}_n$ be the CD  kernel, and $p_n^{(\ell)}$ the orthogonal polynomials associated with $\mu^{(\ell)}$, and let $A_n^{(\ell)}(x)$ be defined by 
\beq \label{2.13.1}
\begin{split}
p_n^{\ell}(x)&=A_{n,1}^{(\ell)}(x)\psi^1_n(x)+A_{n,2}^{(\ell)}(x) \psi^2_n(x) \\
p_{n-1}^{(\ell)}(x)&=A_{n,1}^{(\ell)}(x)\psi^1_{n-1}(x)+A_{n,2}^{(\ell)}(x) \psi^2_{n-1}(x).
\end{split}
\eeq

\begin{lemma} \label{lemma2}
For any $x \in (-2,2)$, $a, b \in \bbC$,
\beq \label{2.12}
\begin{split}
&\lim_{n \rightarrow \infty} \frac{ K_n^{(\ell)}(x+\frac{a}{n},x+\frac{b}{n})}{n\left (A_{n,1}^{(\ell)}(x)^2+A_{n,2}^{(\ell)}(x)^2-A_{n,1}^{(\ell)}(x)A_{n,2}^{(\ell)}(x) x \right) } \\
& \quad = \frac{2\sin \left((\sqrt{4-x^2})^{-1} (b-a) \right)}{\sqrt{4-x^2} (b-a)}
\end{split}
\eeq
where, again, $\frac{\sin(0)}{0}=1$.
Moreover, for any $m$ and $C>0$, the convergence is uniform in $x \in I_m$, and in $|a|,|b|\leq C$. That is, for any $m$, $C>0$ and any  $\varepsilon>0$ there exists $N(\varepsilon,m)$ so that for any $x \in I_m$, any $|a|,|b|\leq C$, and any $n \geq N(\varepsilon,m)$,
\beq \label{2.13.3}
\begin{split}
& \Bigg |\frac{ K_n^{(\ell)}(x+\frac{a}{n},x+\frac{b}{n})}{n\left (A_{n,1}^{(\ell)}(x)^2+A_{n,2}^{(\ell)}(x)^2-A_{n,1}^{(\ell)}(x)A_{n,2}^{(\ell)}(x) x \right) } \\
& \quad - \frac{2\sin \left((\sqrt{4-x^2})^{-1} (b-a) \right)}{\sqrt{4-x^2} (b-a)} \Bigg|
\end{split} <\varepsilon.
\eeq
\end{lemma}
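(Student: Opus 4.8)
The plan is to freeze the variation-of-parameters coefficients at the real point $x$ and compare $K_n^{(\ell)}$ with the constant-coefficient kernel of Lemma \ref{lemma1}. The starting observation is that the perturbation defining $\mu^{(\ell)}$ has $b_{n+1}=0$ for every $n\geq N_\ell$, so by \eqref{2.10} $\Phi_n(\cdot)\equiv 0$ there, and the recurrence \eqref{2.9} makes $A_n^{(\ell)}(\cdot)$ constant in $n$ once $n\geq N_\ell$; write $A^{(\ell)}(\cdot):=A_{N_\ell}^{(\ell)}(\cdot)$. It is real for real argument, and, being a fixed finite product of the $\ell$ matrices $(I+\Phi_{N_i-1}(\cdot))$ applied to the fixed initial vector $(1,0)^T$ (determined by $p_0\equiv 1$, $p_{-1}\equiv 0$ and \eqref{2.2}), it has polynomial, hence continuous, entries. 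In particular the normalizer in \eqref{2.12} equals $\nu(x):=A_1^{(\ell)}(x)^2+A_2^{(\ell)}(x)^2-A_1^{(\ell)}(x)A_2^{(\ell)}(x)x$ for every $n\geq N_\ell$, and for $j\geq N_\ell$ and any argument $z$ one has $p_j^{(\ell)}(z)=\vphi_j^{A^{(\ell)}(z)}(z)$.

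The core estimate is that, with $z=x+\frac{a}{n}$ and $w=x+\frac{b}{n}$, $\tfrac1n\big(K_n^{(\ell)}(z,w)-K_n^{A^{(\ell)}(x)}(z,w)\big)\to 0$ uniformly for $x\in I_m$, $|a|,|b|\leq C$, where $K^{A}$ is as in Lemma \ref{lemma1} with the coefficients frozen at $x$. Writing $p_j^{(\ell)}=\vphi_j^{A^{(\ell)}(x)}+e_j$, the finitely many terms with $j<N_\ell$ contribute a quantity bounded uniformly in $n$ (continuity of the fixed polynomials $p_j^{(\ell)}$ and $\vphi_j^{A^{(\ell)}(x)}$ on a compact neighborhood of $I_m$), so after division by $n$ they vanish. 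For $j\geq N_\ell$ one has $e_j(z)=(A^{(\ell)}(z)-A^{(\ell)}(x))\cdot(\psi^1_j(z),\psi^2_j(z))$; here $|A^{(\ell)}(z)-A^{(\ell)}(x)|\to 0$ uniformly by uniform continuity of $A^{(\ell)}(\cdot)$ on a compact neighborhood of $I_m$, while $|\psi^{1,2}_j(z)|$ is bounded uniformly in $j$ by \eqref{2.8} together with Lemma \ref{bound-extension} (whose proof applies verbatim for $|t|\leq C$ and an extra real shift of size $O(1/n)$). Expanding $p_j^{(\ell)}(z)p_j^{(\ell)}(w)-\vphi_j^{A^{(\ell)}(x)}(z)\vphi_j^{A^{(\ell)}(x)}(w)$ into the three cross terms, each of the at most $n$ summands is $o(1)$ uniformly, so the whole sum is $o(n)$ and $\tfrac1n$ times it tends to $0$.

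It remains to apply Lemma \ref{lemma1} with $A=A^{(\ell)}(x)$, which requires a two-sided bound $1/C'<\|A^{(\ell)}(x)\|<C'$ uniform in $x\in I_m$. Iterating \eqref{2.9} and using $\|I+\Phi_{N_i-1}(x)\|\leq 1+|v_i|M_m^2$ gives $\|A^{(\ell)}(x)\|\leq\prod_{i=1}^{\ell}(1+|v_i|M_m^2)$, while running the recursion backwards via the inverse relation \eqref{2.10.1}, with $\|I-\Phi_{N_i-1}(x)\|\leq 1+|v_i|M_m^2$, gives $\|A^{(\ell)}(x)\|\geq\prod_{i=1}^{\ell}(1+|v_i|M_m^2)^{-1}$. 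Choosing $C'$ above $C$ and above these $(\ell,m)$-dependent constants, Lemma \ref{lemma1} yields $K_n^{A^{(\ell)}(x)}(z,w)/(n\,\nu(x))\to \tfrac{2\sin((\sqrt{4-x^2})^{-1}(b-a))}{\sqrt{4-x^2}(b-a)}$ uniformly on $I_m\times\{|a|,|b|\leq C\}$. Finally, since $A^{(\ell)}(x)$ is real and $|x|\leq 2-\tfrac1m$ on $I_m$, one has $\nu(x)\geq(1-\tfrac12|x|)\|A^{(\ell)}(x)\|^2\geq\tfrac{1}{2m}\|A^{(\ell)}(x)\|^2>0$, so dividing the vanishing difference of the previous paragraph by $n\,\nu(x)$ preserves the limit $0$; adding the two pieces gives \eqref{2.12} with the uniform estimate \eqref{2.13.3}. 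I expect the main obstacle to be the core estimate, specifically bounding $\psi^{1,2}_j$ uniformly in $j$ at the shifted complex argument, so that the per-term $o(1)$ error summed over $\sim n$ terms survives division by $n$; this is exactly where Lemma \ref{bound-extension} is needed. The two-sided bound on $\|A^{(\ell)}(x)\|$, resting on the invertibility recorded in \eqref{2.10.1}, is the other point that cannot be skipped, since without the lower bound the normalizer $\nu(x)$ could degenerate.
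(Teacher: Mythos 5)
Your proof is correct, and it follows the paper's overall strategy (freeze the variation-of-parameters vector $A^{(\ell)}$ at the real center $x$, invoke Lemma \ref{lemma1} for the frozen kernel, show the discrepancy is negligible), but the comparison step is implemented by a genuinely different decomposition. The paper represents both kernels through the Christoffel--Darboux formula \eqref{1.5}, so the discrepancy involves only $p_n, p_{n-1}$ and carries a factor $(x-y)^{-1}$; this forces the restriction $|a-b|>\delta$ in \eqref{2.15} and requires a final Cauchy-integral/analyticity argument to recover $a=b$ and full uniformity. You instead compare the two kernels as sums, term by term: writing $p_j^{(\ell)}(z)=\vphi_j^{A^{(\ell)}(z)}(z)$ for $j\geq N_\ell$ and using that $A^{(\ell)}$ has polynomial entries, each summand differs by $O(1/n)$ uniformly in $j$, so the total discrepancy divided by $n$ vanishes with no singularity at $a=b$; the analyticity argument is then needed only inside Lemma \ref{lemma1}, which you cite in its full uniform form. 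What your route buys is a cleaner, quantitative $O(1/n)$ error and no separate treatment of the diagonal; what it costs is that you must bound $\psi^{1,2}_j(x+\frac{a}{n})$ uniformly over \emph{all} $0\leq j\leq n$, which requires the mild extension of Lemma \ref{bound-extension} you describe (imaginary part up to $C/j$ after rescaling, plus a real shift of size $O(1/n)$ absorbed into a slightly larger interval) --- this should be stated and checked explicitly, though the paper's proof of Lemma \ref{bound-extension} does go through verbatim. Your explicit two-sided bound $\prod_{i\leq\ell}(1+|v_i|M_m^2)^{\pm1}$ on $\|A^{(\ell)}(x)\|$, obtained from \eqref{2.9} and the inverse relation \eqref{2.10.1}, together with the lower bound $\nu(x)\geq(1-\tfrac{|x|}{2})\|A^{(\ell)}(x)\|^2$ (the paper's \eqref{2.24}), correctly supplies the hypotheses $1/C'<\|A\|<C'$ of Lemma \ref{lemma1}; the paper gets the same non-degeneracy more implicitly from continuity and compactness.
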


\begin{proof}
As in the proof of Lemma \ref{lemma1}, it suffices to prove uniform convergence for $|a-b|>\delta$ for each $\delta>0$.
Since, for any $n \geq N_\ell+1$, $A_n^{(\ell)}(x)=A_{N_\ell+1}^{(\ell)}(x)$ we use, for simplicity of notation, $\widetilde{A}(x)=A_{N_\ell+1}^{(\ell)}(x)$.  
Fix  $x_0 \in I_m$ for some $m$ and let
\beq \no
\vphi_n(x)=\widetilde{A}_1(x_0)\psi^1_n(x)+\widetilde{A}_2(x_0) \psi^2_n(x),
\eeq
(note the presence of both $x_0$ and $x$ in the definition of $\vphi_n(x)$).
Let, for $x \neq y$,  
\beq \no
\widehat{K}_n(x_0;x,y)=\frac{ \vphi_n(x) \vphi_{n-1}(y)-\vphi_n(y)\vphi_{n-1}(x)}{x-y}.
\eeq 

By Lemma \ref{lemma1}
\beq \label{2.14}
\begin{split}
& \lim_{n \rightarrow \infty}\frac{1}{\left (\widetilde{A}_1(x_0)^2+\widetilde{A}_2(x_0)^2-\widetilde{A}_1(x_0)\widetilde{A}_2(x_0) x_0 \right) } \frac{ \widehat{K}_n(x_0;x_0+\frac{a}{n},x_0+\frac{b}{n})}{n} \\
& \quad = \frac{2\sin \left((\sqrt{4-x^2})^{-1} (b-a) \right)}{\sqrt{4-x^2} (b-a)}
\end{split}
\eeq
uniformly. 

It is clear that $f(z)=\widetilde{A}(z)$ is an analytic vector-valued function of $z$ (its components are polynomials in $z$). Thus, $\widetilde{A}(x_0+\frac{a}{n}) \rightarrow \widetilde{A}(x_0)$  as $n \rightarrow \infty$ uniformly for $a$ in compact subsets of the plane. Using this, and \eqref{1.5}, it is an easy (though somewhat tedious) computation to see that 
\beq \label{2.15}
\begin{split}
&\lim_{n \rightarrow \infty} 
\Bigg | \frac{1}{\left (\widetilde{A}_1(x_0)^2+\widetilde{A}_2(x_0)^2-\widetilde{A}_1(x_0)\widetilde{A}_2(x_0) x_0 \right) } \frac{ K_n^{(\ell)}(x_0+\frac{a}{n},x_0+\frac{b}{n})}{n} \\
&- \frac{1}{\left (\widetilde{A}_1(x_0)^2+\widetilde{A}_2(x_0)^2-\widetilde{A}_1(x_0)\widetilde{A}_2(x_0) x_0 \right) } 
\frac{\widehat{ K}_n(x_0;x_0+\frac{a}{n},x_0+\frac{b}{n})}{n} \Bigg | =0
\end{split}
\eeq
uniformly in $x_0 \in I_m$ and $|a|,|b| < C$, $|a-b|>\delta$. 

Combining \eqref{2.14} and \eqref{2.15} shows 
\beq \label{2.15.1}
\begin{split}
&\lim_{n \rightarrow \infty} \frac { K_n^{(\ell)}(x_0+\frac{a}{n},x_0+\frac{b}{n})}{n\left (\widetilde{A}_1(x_0)^2+\widetilde{A}_2(x_0)^2-\widetilde{A}_1(x_0)\widetilde{A}_2(x_0) x_0 \right) } \\
& \quad = \frac{2\sin \left((\sqrt{4-x^2})^{-1} (b-a) \right)}{\sqrt{4-x^2} (b-a)}
\end{split}
\eeq
uniformly. Noting that, for fixed $a$, the members of the sequence, as well as the limiting function, are all analytic in $b$, we obtain, as before, the limit for $a=b$. This ends the proof.
\end{proof}

\section{Proof of Theorem \ref{theorem}}

\begin{proof}[Proof of Theorem \ref{theorem}]
Given $\{v_n\}_{n=1}^\infty$ let $\{m_n\}_{n=1}^\infty$ satisfy $m_n \rightarrow \infty$ monotonically as $n \rightarrow \infty$ and $ v_n m_n^2 M_{m_n}^4 \rightarrow 0$ where we recall that $M_m \equiv M_{I_m}$ and $I_m=[-2+\frac{1}{m},2-\frac{1}{m}]$ (recall \eqref{2.8.1}). This is possible since $v_n \rightarrow 0$, so for any $r=1,2,\ldots$, there exists $N_r$ so that for any $n \geq N_r$, $|v_n|<\frac{1}{M_r^4 r^4}$. Thus, we can choose $m_1=m_2=\ldots=m_{N_2}=1$, $m_{N_2+1}=m_{N_2+2}=\ldots=m_{N_3}=2$ and generally, $m_{N_r+1}=m_{N_r+2}=\ldots=m_{N_{r+1}}=r$. In particular, $|v_n| M_{m_n}$ is bounded.

Assume we've fixed $\{N_j\}_{j=1}^\ell$. Let $\ti{I}_{\ell}=I_{m_{\ell+1}-1/\ell}$ (a closed interval contained in the interior of $I_{m_{\ell+1}}$), and consider $a,b \in \bbC$ with $|a|, |b| \leq \ell$, $\Im(a), \Im(b) \leq 1$. By Lemma \ref{lemma2}, there exists $\widehat{N}(\ell)$ such that for any $n \geq \widehat{N}(\ell)$,
\beq \label{2.16}
\begin{split}
& \Bigg |\frac{ K_n^{(\ell)}(x+\frac{a}{n},x+\frac{b}{n})}{n\left (A_{n,1}^{(\ell)}(x)^2+A_{n,2}^{(\ell)}(x)^2-A_{n,1}^{(\ell)}(x)A_{n,2}^{(\ell)}(x) x \right) } \\
& \quad - \frac{2\sin \left((\sqrt{4-x^2})^{-1} (b-a) \right)}{\sqrt{4-x^2} (b-a)} \Bigg| < \frac{1}{\ell}
\end{split}
\eeq
for any $x \in \ti{I}_{\ell}$ and any $|a|, |b| \leq \ell$. By taking $\widehat{N}(\ell)$ large enough, we may also assume that $\Re{\left(x+\frac{a}{n}\right)}, \Re{\left( x+\frac{b}{n}\right)} \in I_{m_{\ell+1}}$ for any $n \geq \widehat{N}(\ell)$. 
Finally, we may assume that 
\beq \label{2.16.1}
\frac{\left| A_{n,1}^{(\ell)}\left(x+\frac{a}{n}\right)\right|^2+\left|A_{n,2}^{(\ell)}\left(x+\frac{a}{n}\right) \right|^2}{\left|A_{n,1}^{(\ell)} \left(x\right) \right|^2+\left| A_{n,2}^{(\ell)} \left( x\right) \right|^2 } \leq 2
\eeq
and 
\beq \label{2.16.2}
\frac{\left| A_{n,1}^{(\ell)}\left(x+\frac{b}{n}\right)\right|^2+\left|A_{n,2}^{(\ell)}\left(x+\frac{b}{n}\right) \right|^2}{\left|A_{n,1}^{(\ell)} \left(x\right) \right|^2+\left| A_{n,2}^{(\ell)} \left( x\right) \right|^2 } \leq 2
\eeq
for $x \in \ti{I}_\ell$, $a,b \in \bbC$ with $|a|, |b| \leq \ell$ and $n \geq \widehat{N}(\ell)$. This is because $\left|A_{n,1}^{(\ell)} \left(x\right) \right|^2+\left| A_{n,2}^{(\ell)} \left( x\right) \right|^2$ is a continuous, non-vanishing function which is independent of $n$ for $n \geq N_{\ell}$ (recall \eqref{2.9}).

We will show that as long as we pick $N_{\ell+1} \geq \widehat{N}(\ell)$ (inductively), \eqref{1.12} holds uniformly for $a,b$ in compact subsets of $\bbR$ and $x \in$ closed subintervals of $(-2,2)$. Our strategy will be to first prove that 
\beq \label{2.17}
\begin{split}
& \Bigg |\frac{ K_n(x+\frac{a}{n},x+\frac{b}{n})}{n\left (A_{n,1}(x)^2+A_{n,2}(x)^2-A_{n,1}(x)A_{n,2}(x) x \right) } \\
& \quad - \frac{2\sin \left((\sqrt{4-x^2})^{-1} (b-a) \right)}{\sqrt{4-x^2} (b-a)} \Bigg| \rightarrow 0
\end{split}
\eeq
uniformly for complex $a,b$ with $|a-b|>\delta$ and $\Im{(a)}, \Im{(b)} \leq 1$ and deduce \eqref{1.12} using the analyticity argument used repeatedly above. 

Note that, for any $N_\ell \leq n < N_{\ell+1}$, $K_n=K_n^{(\ell)}$ and $A_n=A_n^{(\ell)}$. Thus, for any 
$N_{\ell+1} \leq n < N_{\ell+2}$
\beq \no
\begin{split}
& \frac{ K_n(x+\frac{a}{n},x+\frac{b}{n})}{n\left (A_{n,1}(x)^2+A_{n,2}(x)^2-A_{n,1}(x)A_{n,2}(x) x \right) }\\
&\quad -\frac{ K_n^{(\ell)}(x+\frac{a}{n},x+\frac{b}{n})}{n\left (A_{n,1}^{(\ell)}(x)^2+A_{n,2}^{(\ell)}(x)^2-A_{n,1}^{(\ell)}(x)A_{n,2}^{(\ell)}(x) x \right) } \\
&=\frac{ K_n^{(\ell+1)}(x+\frac{a}{n},x+\frac{b}{n})}{n\left (A_{n,1}^{(\ell+1)}(x)^2+A_{n,2}^{(\ell+1)}(x)^2-A_{n,1}^{(\ell+1)}(x)A_{n,2}^{(\ell+1)}(x) x \right) }\\
&\quad -\frac{ K_n^{(\ell)}(x+\frac{a}{n},x+\frac{b}{n})}{n\left (A_{n,1}^{(\ell)}(x)^2+A_{n,2}^{(\ell)}(x)^2-A_{n,1}^{(\ell)}(x)A_{n,2}^{(\ell)}(x) x \right) },
\end{split}
\eeq
and since \eqref{2.16} holds for any $n \geq N_{\ell+1}$, $x \in \ti{I}_{\ell}$ and $|a|, |b| < \ell$, it will be enough to show that 
\beq \label{2.18}
\begin{split}
&\max_{N_{\ell+1} \leq n <N_{\ell+2}, x \in \ti{I}_{\ell}} \Bigg|\frac{ K_n^{(\ell+1)}(x+\frac{a}{n},x+\frac{b}{n})}{n\left (A_{n,1}^{(\ell+1)}(x)^2+A_{n,2}^{(\ell+1)}(x)^2-A_{n,1}^{(\ell+1)}(x)A_{n,2}^{(\ell+1)}(x) x \right) }\\
&\quad -\frac{ K_n^{(\ell)}(x+\frac{a}{n},x+\frac{b}{n})}{n\left (A_{n,1}^{(\ell)}(x)^2+A_{n,2}^{(\ell)}(x)^2-A_{n,1}^{(\ell)}(x)A_{n,2}^{(\ell)}(x) x \right) } \Bigg|  \rightarrow 0
\end{split}
\eeq
as $\ell \rightarrow \infty$, uniformly in $|a|, |b| <C$, $|a-b|>\delta$.

For notational simplicity, let $\kappa_n^{(\ell)}(x)=A_{n,1}^{(\ell)}(x)^2+A_{n,2}^{(\ell)}(x)^2-A_{n,1}^{(\ell)}(x)A_{n,2}^{(\ell)}(x) x$, and write
\beq \label{decomposition}
\begin{split}
& \Bigg|\frac{ K_n^{(\ell+1)}(x+\frac{a}{n},x+\frac{b}{n})}{n \kappa_n^{(\ell+1)}(x) } -\frac{ K_n^{(\ell)}(x+\frac{a}{n},x+\frac{b}{n})}{n\kappa_n^{(\ell)}(x) } \Bigg| \\
& \leq \Bigg|\frac{ K_n^{(\ell+1)}(x+\frac{a}{n},x+\frac{b}{n})}{n \kappa_n^{(\ell+1)}(x) } -\frac{ K_n^{(\ell)}(x+\frac{a}{n},x+\frac{b}{n})}{n\kappa_n^{(\ell+1)}(x) } \Bigg| \\
&+\Bigg|\frac{ K_n^{(\ell)}(x+\frac{a}{n},x+\frac{b}{n})}{n \kappa_n^{(\ell+1)}(x) }-\frac{ K_n^{(\ell)}(x+\frac{a}{n},x+\frac{b}{n})}{n\kappa_n^{(\ell)}(x) } \Bigg| \\
&=\Bigg|\frac{ K_n^{(\ell+1)}(x+\frac{a}{n},x+\frac{b}{n}) - K_n^{(\ell)}(x+\frac{a}{n},x+\frac{b}{n})}{n\kappa_n^{(\ell+1)}(x) } \Bigg| \\
&+ \Bigg|\frac{ K_n^{(\ell)}(x+\frac{a}{n},x+\frac{b}{n})}{n\kappa_n^{(\ell)}(x) }  \Bigg| \Bigg|\frac{\kappa_n^{(\ell)}(x)-\kappa_n^{(\ell+1)}(x)}{ \kappa_n^{(\ell+1)}(x) }\Bigg|.
\end{split}
\eeq

We treat the summands on the right hand side one by one, starting with $\Bigg|\frac{ K_n^{(\ell+1)}(x+\frac{a}{n},x+\frac{b}{n}) - K_n^{(\ell)}(x+\frac{a}{n},x+\frac{b}{n})}{n\kappa_n^{(\ell+1)}(x) } \Bigg| $.
Note that for any $n < N_{\ell+1}$, $p_n^{(\ell)}(x)=p_n^{(\ell+1)}(x)$. For $n=N_{\ell+1}$, by \eqref{1.2},
\beq \no
\begin{split}
p^{(\ell+1)}_n(x)&=xp^{(\ell+1)}_{n-1}(x)-p_{n-2}^{(\ell+1)}(x)-v_{\ell+1}p^{(\ell+1)}_{n-1}(x) \\
&=xp^{(\ell)}_{n-1}(x)-p^{(\ell)}_{n-2}(x)-v_{\ell+1}p^{(\ell)}_{n-1}(x) =p^{(\ell)}_{n}(x)-v_{\ell+1}p^{(\ell)}_{n-1}(x).
\end{split}
\eeq
Thus, for any $N_{\ell+1}\leq n < N_{\ell+2}$,
\beq \label{2.19.1}
\begin{split}
\left(\begin{array}{c} p^{(\ell+1)}_n(x) \\ p^{(\ell+1)}_{n-1}(x) \end{array} \right) &=T_{n-N_{\ell+1}}(x) \left(\begin{array}{c} p^{(\ell+1)}_{N_{\ell+1}}(x) \\ p^{(\ell+1)}_{N_{\ell+1}-1}(x) \end{array} \right) \\
&=T_{n-N_{\ell+1}}(x) \left(\begin{array}{c} p^{(\ell)}_{N_{\ell+1}}(x)-v_{\ell+1}p^{(\ell)}_{N_{\ell+1}-1}(x) \\ p^{(\ell)}_{N_{\ell+1}-1}(x) \end{array} \right) \\
&=T_{n-N_{\ell+1}}(x) \left(\begin{array}{c} p^{(\ell)}_{N_{\ell+1}}(x)\\ p^{(\ell)}_{N_{\ell+1}-1}(x) \end{array} \right) \\
&\quad -v_{\ell+1}p^{(\ell)}_{N_{\ell+1}-1}(x)T_{n-N_{\ell+1}}(x)  \left(\begin{array}{c} 1\\ 0 \end{array} \right), \\
\end{split}
\eeq
which implies
\beq \label{2.19}
\begin{split}
\left(\begin{array}{c} p^{(\ell+1)}_n(x) \\ p^{(\ell+1)}_{n-1}(x) \end{array} \right) 
&=  \left(\begin{array}{c} p^{(\ell)}_n(x) \\ p^{(\ell)}_{n-1}(x) \end{array} \right) -v_{\ell+1}p^{(\ell)}_{N_{\ell+1}-1}(x)T_{n-N_{\ell+1}}(x)  \left(\begin{array}{c} 1\\ 0 \end{array} \right) \\
&=  \left(\begin{array}{c} p^{(\ell)}_n(x) \\ p^{(\ell)}_{n-1}(x) \end{array} \right)
-v_{\ell+1}p^{(\ell)}_{N_{\ell+1}-1}(x)  \left(\begin{array}{c} \psi_{n-N_{\ell+1}}^1(x)\\  \psi_{n-1-N_{\ell+1}}^1(x) \end{array} \right).
\end{split}
\eeq
Using Lemma \ref{bound-extension} and $2|\alpha||\beta| \leq |\alpha|^2 +|\beta|^2$, it follows that, for $N_{\ell+1}\leq n < N_{\ell+2}$, $x \in \ti{I}_\ell$ and $a, b$ with $\Im(a), \Im(b) \leq 1$, and $|a|, |b| \leq \ell$,
\beq \label{2.20}
\begin{split}
&\Bigg|\left(p^{(\ell+1)}_n\left(x+\frac{a}{n} \right)p^{(\ell+1)}_{n-1}\left(x+\frac{b}{n}\right)-p^{(\ell+1)}_n\left(x+\frac{b}{n}\right)p^{(\ell+1)}_{n-1}\left(x+\frac{a}{n}\right) \right)\\
&-\left( p^{(\ell)}_n\left(x+\frac{a}{n}\right)p^{(\ell)}_{n-1}\left(x+\frac{b}{n}\right)-p^{(\ell)}_n\left(x+\frac{b}{n}\right)p^{(\ell)}_{n-1}\left(x+\frac{a}{n}\right) \right)\Bigg| \\
& \leq 4|v_{\ell+1}|^2 M_{I_{m_{\ell+1}}}^2\left( \left|p^{(\ell)}_{N_{\ell+1}-1}\left(x+\frac{a}{n}\right) \right|^2+
\left|p^{(\ell)}_{N_{\ell+1}-1}\left(x+\frac{b}{n}\right)\right|^2 \right)\\
&+4|v_{\ell+1}| M_{I_{m_{\ell+1}}} \Bigg (\left|p^{(\ell)}_{N_{\ell+1}-1}\left(x+\frac{a}{n}\right) \right|^2+
\left|p^{(\ell)}_{N_{\ell+1}-1}\left(x+\frac{b}{n}\right)\right|^2 \\
&+\left|p^{(\ell)}_{n}\left(x+\frac{a}{n}\right) \right|^2+\left|p^{(\ell)}_{n-1}\left(x+\frac{a}{n}\right) \right|^2
+\left|p^{(\ell)}_{n}\left(x+\frac{b}{n}\right) \right|^2 \\
&+\left|p^{(\ell)}_{n-1}\left(x+\frac{b}{n}\right) \right|^2\Bigg).
\end{split}
\eeq

Now, by \eqref{2.13.1} and Lemma \ref{bound-extension}, for any $x \in I$, $I \subseteq (-2,2)$ closed and any $-1 \leq t \leq 1$,
\beq \label{2.21}
\begin{split}
& \left |p_n^{(\ell)}\left(x+\frac{it}{n}\right)\right|^2+\left|p_{n+1}^{(\ell)}\left(x+\frac{it}{n}\right)\right|^2 \\
& \leq M_{I}^2 \left (\left|A_{n,1}^{(\ell)}\left(x+\frac{it}{n}\right)\right|^2+\left|A_{n,2}^{(\ell)}\left(x+\frac{it}{n}\right)\right|^2 \right).
\end{split}
\eeq

Moreover, by \eqref{2.10.1}, for any $n \geq N_{\ell}$,
\beq \label{2.22}
\left (\left|A_{n,1}^{(\ell)}(x)\right|^2+\left|A_{n,2}^{(\ell)}(x)\right|^2 \right) \leq \left(1+|v_{\ell+1}| M_I \right)^2 \left (\left|A_{n,1}^{(\ell+1)}(x)\right|^2+\left|A_{n,2}^{(\ell+1)}(x)\right|^2 \right).
\eeq

Combining \eqref{2.21}, \eqref{2.16.1}, \eqref{2.16.2} and \eqref{2.22} with \eqref{2.20} we get (recall  \mbox{$M_{I_{m_{\ell+1}}} \geq 1$}) 
\beq \label{2.23}
\begin{split}
&\Bigg|\left(p^{(\ell+1)}_n\left(x+\frac{a}{n} \right)p^{(\ell+1)}_{n-1}\left(x+\frac{b}{n}\right)-p^{(\ell+1)}_n\left(x+\frac{b}{n}\right)p^{(\ell+1)}_{n-1}\left(x+\frac{a}{n}\right) \right)\\
&-\left( p^{(\ell)}_n\left(x+\frac{a}{n}\right)p^{(\ell)}_{n-1}\left(x+\frac{b}{n}\right)-p^{(\ell)}_n\left(x+\frac{b}{n}\right)p^{(\ell)}_{n-1}\left(x+\frac{a}{n}\right) \right)\Bigg| \\
& \leq\left( 8|v_{\ell+1}|^2 M_{I_{m_{\ell+1}}}^4+24|v_{\ell+1}| M_{I_{m_{\ell+1}}}^4 \right)\left(1+|v_{\ell+1}|M_{I_{m_{\ell+1}}} \right)^2 \\
& \times \left( \left|A_{n,1}^{(\ell+1)}(x) \right|^2 + \left|A_{n,2}^{(\ell+1)}(x) \right|^2\right)\\
& \leq  D |v_{\ell+1}|M_{I_{m_{\ell+1}}}^4 \left( \left|A_{n,1}^{(\ell+1)}(x) \right|^2 + \left|A_{n,2}^{(\ell+1)}(x) \right|^2\right)\\
\end{split}
\eeq
where $D$ is some constant which is independent of $n$, $\ell$ and $x$.

On the other hand, it is easy to see that 
\beq \label{2.24}
\left|\kappa_n^{(\ell+1)}(x) \right| \geq \left(1-\frac{|x|}{2} \right)\left( \left|A_{n,1}^{(\ell+1)}(x) \right|^2+\left|A_{n,2}^{(\ell+1)} \right|^2 \right).
\eeq 
It follows from \eqref{2.23} and \eqref{2.24} that 
\beq \label{2.25}
\Bigg|\frac{ K_n^{(\ell+1)}(x+\frac{a}{n},x+\frac{b}{n}) - K_n^{(\ell)}(x+\frac{a}{n},x+\frac{b}{n})}{n\kappa_n^{(\ell+1)}(x) } \Bigg| \leq \frac{2 D |v_{\ell+1}|M_{I_{m_{\ell+1}}}^4 }{(2-|x|) |b-a|}
\eeq
which implies, by the choice of $m_n$, that
\beq \label{2.26}
 \max_{N_{\ell+1} \leq n <N_{\ell+2}, x \in \ti{I}_{\ell}}\Bigg|\frac{ K_n^{(\ell+1)}(x+\frac{a}{n},x+\frac{b}{n}) - K_n^{(\ell)}(x+\frac{a}{n},x+\frac{b}{n})}{n\kappa_n^{(\ell+1)}(x) } \Bigg| \rightarrow 0
\eeq
as $\ell \rightarrow \infty$, uniformly for $a,b$ with $|a|, |b| \leq C$ and $\Im(a), \Im(b) \leq 1$ and satisfying $|b-a|>\delta$. 

As for the second term on the right hand side of \eqref{decomposition}, we write
\beq \label{2.27}
\begin{split}
& \left(\left(A_{n,1}^{(\ell+1)}(x)\right)^2+\left(A_{n,2}^{(\ell+1)}(x)\right)^2-A_{n,1}^{(\ell+1)}(x)A_{n,2}^{(\ell+1)}x \right) \\
&- \left(\left(A_{n,1}^{(\ell)}(x)\right)^2+\left(A_{n,2}^{(\ell)}(x)\right)^2-A_{n,1}^{(\ell)}(x)A_{n,2}^{(\ell)}x\right) \\
& =\left(A_{n,1}^{(\ell+1)}(x)-A_{n,1}^{(\ell)}(x) \right) \left(A_{n,1}^{(\ell+1)}(x)+A_{n,1}^{(\ell)}(x)  \right) \\
& +\left(A_{n,2}^{(\ell+1)}(x)-A_{n,2}^{(\ell)}(x) \right) \left(A_{n,2}^{(\ell+1)}(x)+A_{n,2}^{(\ell)}(x)  \right) \\
&-xA_{n,1}^{(\ell+1)}(x) \left(A_{n,2}^{(\ell+1)}(x)-A_{n,2}^{(\ell)}(x) \right)-xA_{n,2}^{(\ell)}(x) \left(A_{n,1}^{(\ell+1)}(x)-A_{n,1}^{(\ell)}(x) \right),
\end{split}
\eeq
which implies, by \eqref{2.9} and \eqref{2.10.1},
\beq \no
\begin{split}
& \left| \kappa_n^{(\ell+1)}(x)-\kappa_{n}^{(\ell)}(x) \right| \\
&= \Bigg| \left(\left(A_{n,1}^{(\ell+1)}(x)\right)^2+\left(A_{n,2}^{(\ell+1)}(x)\right)^2-A_{n,1}^{(\ell+1)}(x)A_{n,2}^{(\ell+1)}x \right) \\
&- \left(\left(A_{n,1}^{(\ell)}(x)\right)^2+\left(A_{n,2}^{(\ell)}(x)\right)^2-A_{n,1}^{(\ell)}(x)A_{n,2}^{(\ell)}x\right) \Bigg| \\
& \leq |v_{\ell+1}|M_{I_{m_{\ell+1}}}^2  \Bigg(\left|A_{n,1}^{(\ell+1)}(x)\right|^2+\left|A_{n,1}^{(\ell+1)}(x)A_{n,1}^{(\ell)}(x)\right| \\
&+ \left|A_{n,2}^{(\ell+1)}(x)\right|^2+\left|A_{n,2}^{(\ell+1)}A_{n,2}^{(\ell)}(x)\right| \\
&+|x|\left|A_{n,1}^{(\ell+1)}(x)A_{n,2}^{(\ell+1)}(x) \right|+|x|\left|A_{n,1}^{(\ell)}(x)A_{n,2}^{(\ell)}(x) \right|\Bigg), \\
\end{split}
\eeq
so that
\beq \label{2.28}
\begin{split}
& \left| \kappa_n^{(\ell+1)}(x)-\kappa_{n}^{(\ell)}(x) \right| \\
& \leq |v_{\ell+1}|M_{I_{m_{\ell+1}}}^2  (2+|x|) \Bigg(\left|A_{n,1}^{(\ell+1)}(x)\right|^2+\left|A_{n,1}^{(\ell+2)}(x)\right|^2 + \left|A_{n,1}^{(\ell)}(x)\right|^2 \\
&+\left|A_{n,1}^{(\ell)}(x)\right|^2 \Bigg) \\
& \leq |v_{\ell+1}|M_{I_{m_{\ell+1}}}^2 \left (4+(1+|v_{\ell+1}|M_{I_{m_{\ell+1}}})^2\right)  \left(\left|A_{n,1}^{(\ell+1)}(x)\right|^2+\left|A_{n,1}^{(\ell+2)}(x)\right|^2 \right), \\
\end{split}
\eeq 
by \eqref{2.22} (recall $|x| \leq 2$).

Using again \eqref{2.24}, we deduce that 
\beq \label{2.29}
\begin{split}
\Bigg|\frac{\kappa_n^{(\ell)}(x)-\kappa_n^{(\ell+1)}(x)}{ \kappa_n^{(\ell+1)}(x) }\Bigg| \leq \frac{\widetilde{D} |v_{\ell+1}|M_{I_{m_{\ell+1}}}^2}{(2-|x|)},
\end{split}
\eeq
where $\widetilde{D}$ is some constant that is independent of $x$ and $n$. Since 
\beq \no
 \Bigg|\frac{ K_n^{(\ell)}(x+\frac{a}{n},x+\frac{b}{n})}{n\kappa_n^{(\ell)}(x) }  \Bigg| \leq \frac{\widetilde{C}}{(2-|x|)}
\eeq
for any $n \geq N_{\ell+1}$, with $\widetilde{C}$ some universal constant (recall \eqref{2.16}), we see that 
\beq \label{2.30}
\begin{split}
& \max_{N_{\ell+1} \leq n <N_{\ell+2}, x \in \ti{I}_{\ell}} 
 \Bigg|\frac{ K_n^{(\ell)}(x+\frac{a}{n},x+\frac{b}{n})}{n\kappa_n^{(\ell)}(x) }  \Bigg| \Bigg|\frac{\kappa_n^{(\ell)}(x)-\kappa_n^{(\ell+1)}(x)}{ \kappa_n^{(\ell+1)}(x) }\Bigg| \\
& \leq \frac{\widetilde{C} \widetilde{D} |v_{\ell+1}|M_{I_{m_{\ell+1}}}^2}{(2-|x|)^2}.
\end{split}
\eeq
This implies, by the choice of $m_n$, that 
\beq \label{2.31}
\max_{N_{\ell+1} \leq n <N_{\ell+2}, x \in \ti{I}_{\ell}} 
 \Bigg|\frac{ K_n^{(\ell)}(x+\frac{a}{n},x+\frac{b}{n})}{n\kappa_n^{(\ell)}(x) }  \Bigg| \Bigg|\frac{\kappa_n^{(\ell)}(x)-\kappa_n^{(\ell+1)}(x)}{ \kappa_n^{(\ell+1)}(x) }\Bigg| \rightarrow 0
\eeq
as $\ell \rightarrow \infty$.

Combining \eqref{decomposition}, \eqref{2.26} and \eqref{2.31} we obtain \eqref{2.18}, for $a \neq b$, uniformly for $x \in$ compact subsets of $(-2,2)$, $|a|, |b| \leq C$ with $\Im(a), \Im(b) \leq 1$, and $|a-b| > \delta$. This, in turn, implies \eqref{2.17} under the same conditions.

Now, having obtained the limit for $a \neq b$ we note, as before, that, for fixed $a$, the limiting function, as well as the members of the sequence, is analytic in a strip and has an analytic extension to $b=a$. By integrating along a closed path around $a$ (recall the convergence is uniform for $|b-a|> \delta$) we see there's convergence at $b=a$ in the interior of the strip as well. Taking $a=b=0$ in \eqref{2.17} we obtain that 
\beq \no
\lim_{n \rightarrow \infty} \frac{ K_n(x,x)}{n\left (A_{n,1}(x)^2+A_{n,2}(x)^2-A_{n,1}(x)A_{n,2}(x) x \right) } = \frac{2}{4-x^2}
\eeq 
which implies immediately
\beq \no
\lim_{n \rightarrow \infty }\frac{ K_n(x+\frac{a}{n},x+\frac{b}{n})}{K_n(x,x) }=\frac{\sin \left((\sqrt{4-x^2})^{-1} (b-a) \right)}{\sqrt{4-x^2}^{-1} (b-a)}
\eeq
for any $a,b$ in the interior of the strip of width $1$ around $\bbR$. In particular, this holds for $a, b \in \bbR$.
This concludes the proof of the theorem.
\end{proof}



\begin{thebibliography}{cc}

\bibitem{als} A.~Avila, Y.~Last and B.~Simon, {\it Bulk universality and clock spacing of zeros for ergodic Jacobi matrices with a.c.\ spectrum}, Analysis and PDE, {\bf 3} (2010), 81--108.

\bibitem{deift} P.~Deift, {\it Orthogonal Polynomials and Random Matrices: A Riemann-Hilbert Approach}, Courant Institute Lecture Notes, {\bf 3} New York University Press, New York, 1999.

\bibitem{findley} E.~Findley, {\it Universality for regular measures satisfying Szeg\H o's condition}, J.\ Approx.\ Theory, {\bf 155} (2008), 136--154.

\bibitem{Freud} G.~Freud,
\textit{Orthogonal Polynomials},
Pergamon Press, Oxford-New York, 1971.

\bibitem{kls} A.~Kiselev, Y.~Last and  B.~Simon, {\it Modified Pr\"ufer and EFGP transforms and the spectral analysis of one-dimensional Schr\"odinger operators}, Commun.\ Math.\ Phys., {\bf 194} (1998), 1--45.

\bibitem{last-review} Y.~Last {\it Spectral theory of Sturm-Liouville operators on infinite 
intervals: A review of recent developments}, in: Amrein W.O., Hinz A.M., Pearson D.B. (Eds.) 
Sturm-Liouville Theory: Past and Present. Basel: Birkh\"auser Verlag, 2005, pp. 99--120. 

\bibitem{LeLu} E.~Levin, and D.~Lubinsky {\it Application of universality limits to zeros and reproducing kernels of 
orthogonal polynomials}, J.\ Approx.\ Theory {\bf 150} (2008), 69--95.


\bibitem{lubinsky1} D.~S.~Lubinsky, {\it A new approach to universality involving orthogonal polynomials}, Annals of Math., {\bf 170} (2009), 915--939.

\bibitem{lubinsky2} D.~S.~Lubinsky, {\it Universality limits in the bulk for arbitrary measures on compact sets}, J.\ Anal.\ Math., {\bf 106} (2008), 373--394.

\bibitem{lubinskyRev} D.~S.~ Lubinsky {\it Some recent methods for establishing universality limits}, J.\ Nonlinear Anal., {\bf 71} (2009), e2750--e2765.

\bibitem{NevFr} P.~Nevai,
{\it G\'eza Freud, orthogonal polynomials and Christoffel functions. A case study},
J.\ Approx.\ Theory {\bf 48} (1986), 167 pp.


\bibitem{pearson} D.~B.~Pearson {\it Singular continuous measures in scattering theory}, Commun.\ Math.\ Phys., {\bf 60} (1978), 13--36.

\bibitem{Simon-ext} B.~Simon, {Two extensions of Lubinsky's universality theorem}, J.\ Anal.\ Math., {\bf 105} (2008), 345--362.

\bibitem{Simon-CD} B.~Simon, {\it The Christoffel-Darboux kernel}, in ``Perspectives in PDE, Harmonic Analysis and Applications'', pp 295--335, Proc.\ Sympos.\ Pure Math.\ {\bf 79}, American Mathematical Society, Providence, RI, 2008.

\bibitem{totik} V.~Totik, {\it Universality and fine zero spacing on general sets}, Ark.\ Mat., {\bf 47} (2009), 361--391.


\end{thebibliography}
\end{document}